\newif\ifbiber
\DeclareCiteCommand{\cite}{%
	\ifbibmacroundef{cite:init}{}{\usebibmacro{cite:init}}\usebibmacro{prenote}%
}{%
	\usebibmacro{citeindex}%
	\printtext[bibhyperref]{\usebibmacro{cite}}%
}{%
	\ifbibmacroundef{cite:init}{\multicitedelim}{}%
}{%
	\usebibmacro{postnote}%
}%
\DeclareCiteCommand{\parencite}[\mkbibbrackets]{%
	\ifbibmacroundef{cite:init}{}{\usebibmacro{cite:init}}\usebibmacro{prenote}%
}{%
	\usebibmacro{citeindex}%
	\printtext[bibhyperref]{\usebibmacro{cite}}%
}{%
	\ifbibmacroundef{cite:init}{\multicitedelim}{}%
}{%
	\usebibmacro{postnote}%
}%
\let\cite\parencite
\newcommand\norm[1]{\lVert#1\rVert}
\newcommand\abs[1]{\lvert#1\rvert}
\newcommand\dual[2]{\langle #1, #2\rangle}
\newcommand\scalarprod[2]{( #1, #2)}
\newcommand\R{\mathbb{R}}
\newcommand\EE{\mathcal{E}}
\renewcommand\d{\mathrm{d}}
\newcommand\capa{\operatorname{cap}}
\newcommand\supp{\operatorname{supp}}
\newcommand{\weakly}{\rightharpoonup}
\newcommand{\anni}{^\perp}
\newcommand{\polar}{^\circ}
\newcommand{\dualspace}{^\star}
\newcommand{\bidualspace}{^{\star\star}}
\newcommand\BB{\mathcal{B}}
\newcommand\LL{\mathcal{L}}
\newcommand\TT{\mathcal{T}}
\newcommand\RR{\mathcal{R}}
\newcommand\KK{\mathcal{K}}
\newcommand\SSS{\mathcal{S}}
\newcommand\MM{\mathcal{M}}
\newcommand\NN{\mathcal{N}}
\DeclareMathAlphabet{\mathpzc}{OT1}{pzc}{m}{it}
\newcommand\sign{\operatorname{sign}}
\newcommand\cl{\operatorname{cl}}
\newcommand\tr{\operatorname{tr}}
\let\subseteq\subset
\newcommand\orcid[1]{%
	\hspace{.25em}%
	\href{http://orcid.org/#1}{%
		\protect\includegraphics[height=1em]{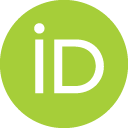}%
	}%
	\hspace{.25em}%
}
\newtheorem{theorem}{Theorem}[section]
\newtheorem{lemma}[theorem]{Lemma}
\newtheorem{proposition}[theorem]{Proposition}
\newtheorem{corollary}[theorem]{Corollary}
\newtheorem{definition}[theorem]{Definition}
\definecolor{darkgreen}{rgb}{0,0.5,0}
\definecolor{darkred}{rgb}{0.8,0,0}
\begin{document}
\title{On the Non-Polyhedricity of Sets with Upper and Lower Bounds in Dual Spaces%
\footnote{
This research was supported by the German Research Foundation (DFG) under grant numbers ME 3281/7-1 and WA 3636/4-1
 within the priority program ``Non-smooth and Complementarity-based Distributed Parameter
Systems: Simulation and Hierarchical Optimization'' (SPP 1962).}
}

\author{%
	Constantin Christof%
	\footnote{%
		Technische Universität Dortmund,
		Faculty of Mathematics,
		LS X, 
		44227 Dortmund,
		Germany
		}%
	\and
	Gerd Wachsmuth%
	\footnote{%
		Technische Universität Chemnitz,
		Faculty of Mathematics,
		Professorship Numerical Mathematics (Partial Differential Equations),
		09107 Chemnitz,
		Germany,
		\url{http://www.tu-chemnitz.de/mathematik/part_dgl/people/wachsmuth/},
		\email{gerd.wachsmuth@mathematik.tu-chemnitz.de}%
	}
	\orcid{0000-0002-3098-1503}%
}
\publishers{}
\maketitle

\begin{abstract}
We demonstrate that the set $L^\infty(X, [-1,1])$
 of all measurable functions over a Borel measure space $(X, \BB, \mu )$ with values in the unit interval is typically non-polyhedric when interpreted as a subset of a dual space.
Our findings contrast the classical result that subsets of Dirichlet spaces with pointwise upper and lower bounds are polyhedric.
In particular, additional structural assumptions are unavoidable when the concept of polyhedricity is used to study the differentiability properties of solution maps to variational inequalities of the second kind in, e.g., the spaces $H^{1/2}(\partial \Omega)$ or $H_0^1(\Omega)$.
\end{abstract}

\begin{keywords}
	Polyhedricity,  Frictional Contact Problems, Variational Inequalities of the Second Kind, Directional Differentiability, Sensitivity Analysis
\end{keywords}

 \begin{msc}
	\mscLink{35B30},
	\mscLink{47J22},
	\mscLink{49K40}
\end{msc}

\section{Introduction} 
As it is well known, the concept of polyhedricity is of major importance for the study of optimization problems and variational inequalities. It corresponds to a notion of ``uncurvedness''  in Banach spaces and, as such, provides a sufficient criterion for the directional differentiability of metric projections and the validity of no-gap second-order optimality conditions. We refer to, e.g., \cite{Haraux1977,Mignot1976, BonnansShapiro2000,Wachsmuth2016:2,ChristofWachsmuth2017:1} for details on these topics.
The result that is most commonly used to check the condition of polyhedricity is Mignot's classical theorem on the polyhedricity of sets with upper and lower bounds in Dirichlet spaces, cf.\ \cite[Théorème~3.2]{Mignot1976} and also the more recent contribution \cite{Wachsmuth2016:2}.
This theorem yields that, if $V \subseteq L^2(X, \mu)$ is a Dirichlet space over some Borel measure space $(X, \BB, \mu )$, then a set of the form
\begin{equation*}
  \left \{ v \in V \mid \varphi \leq v \leq \psi \ \mu\text{-a.e.\ in } X \right \}
\end{equation*}
is always polyhedric provided the bounds $\varphi$ and $\psi$ are measurable functions.
Typical examples of sets that fall into the setting of Mignot are, e.g.,
\begin{gather*}
 \left \{ v \in H^1(\Omega)\mid  -1 \leq v \leq 1 \ \LL^d\text{-a.e.\ in } \Omega\right \}, 
\\
 \left \{ v \in H^{1/2}(\partial \Omega)\mid  -1 \leq v \leq 1 \ \SSS^{d-1}\text{-a.e.\ in } \partial \Omega\right \},
\end{gather*}
where $\LL^d$ and $\SSS^{d-1}$ denote the Lebesgue and the surface measure, respectively, and $\Omega \subset \R^d$ is a domain whose boundary is Lipschitz. 

The aim of this paper is to prove that a result analogous to Mignot's theorem does not hold for sets with upper and lower bounds in the dual $V\dualspace$ of commonly used Dirichlet spaces $V$. To be more precise, in what follows, we demonstrate that the set 
\begin{equation}
\label{eq:BoundSetDual}
 L^\infty(X, [-1,1]) := \left \{ v \in L^\infty(X, \mu) \mid  -1 \leq v \leq 1 \ \mu\text{-a.e.\ in } X \right \}  
\end{equation}
is typically non-polyhedric in the dual $V\dualspace$ of a space $V$ with $V \hookrightarrow L^1(X, \mu)$ which is Dirichlet or continuously embedded into $C_b(X)$
and which satisfies mild additional assumptions.

Note that sets of the form \eqref{eq:BoundSetDual} arise naturally in the study of elliptic variational inequalities of the second kind, see, e.g, \cite{Sokolowski1988,SokolowskiZolesio1988,SokolowskiZolesio1992}. They appear when a variational inequality of the second kind involving an $L^1$-norm is transformed into a variational inequality of the first kind by dualization and are thus of relevance for questions of sensitivity analysis, cf.\ \cref{sec:motivation}.

Before we begin with our analysis, we give a short overview of the structure and the contents of this paper:

In \cref{sec:motivation}, we motivate in more detail why sets of the form \eqref{eq:BoundSetDual} are important for the sensitivity analysis of elliptic variational inequalities of the second kind. The main result of this section, \cref{thm:Differentiablity}, demonstrates that the differentiability properties of the solution operator to a variational inequality of the second kind involving a  proper, convex, lower semicontinuous and positively homogeneous functional $j$ (see \eqref{eq:VI_motivation} for the precise structure) depend largely on the curvature properties of the subdifferential $\partial j(0)$. This yields in particular that the polyhedricity of sets of the form \eqref{eq:BoundSetDual} (in dual space) constitutes a sufficient criterion for the differential stability of variational inequalities of the second kind involving an $L^1$-norm, cf.\ the examples at the end of \cref{sec:motivation}.

In \cref{sec:setswithbounds}, we demonstrate that sets of the form \eqref{eq:BoundSetDual} are indeed typically non-polyhedric when considered as subsets of the dual $V\dualspace$ of a space $V$ that is Dirichlet or continuously embedded into the bounded continuous functions $C_b(X)$.

Lastly, in \cref{sec:conclusion}, we summarize and interpret our findings. Here, we also address the consequences that our results have for the study of, e.g, contact problems with prescribed friction. 

\section{Motivation}
\label{sec:motivation}

To motivate our analysis, we consider an elliptic variational inequality of the second kind, i.e., a problem of the form
\begin{equation}
	\label{eq:VI_motivation}
	\text{Find }y \in V:\qquad 
	\dual{A \, y - f}{v - y}
	+
	j(v) - j(y)
	\ge
	0
	\qquad\forall v \in V.
\end{equation}
Our assumptions on the quantities in \eqref{eq:VI_motivation} are as follows:
\begin{enumerate}
\item $V$ is a real Hilbert space with dual $V\dualspace$ and dual pairing $\left \langle \cdot, \cdot \right \rangle$,
\item $A : V \to V\dualspace$ is a linear, bounded and coercive mapping, i.e., 
\begin{equation*}
	\exists c,C > 0 : \
	c \, \norm{v}_V^2 \le \dual{A \, v}{v} \le C \, \norm{v}_V^2 \quad \forall v \in V,
\end{equation*}
\item $f \in V\dualspace$ is a given datum,
\item $j : V \to (-\infty, \infty]$
is a proper, convex and lower semicontinuous functional that is positively homogeneous of degree one, i.e., $j(\alpha v ) = \alpha j(v)$ for all $v \in V$, $\alpha > 0$.
\end{enumerate}
Note that we could also work with a Banach space $V$ in the above. This, however, would just pretend more generality. It is well known that the existence of a  linear, bounded and coercive mapping implies that  a Banach space is isomorphic to a Hilbert space. Thus, we restrict our attention w.l.o.g.\ to the Hilbert space setting.

From standard results in convex analysis and from the theory of  elliptic variational inequalities, we obtain the following result concerning problem \eqref{eq:VI_motivation}.

\begin{proposition}
	\label{proposition:SolvabilityDuality}
	Problem \eqref{eq:VI_motivation} admits a unique solution $y \in V$ for all $f \in V\dualspace$. This solution satisfies $y = A^{-1}(f-q)$, where $q \in V\dualspace$ is the unique solution to the  elliptic variational inequality 
	\begin{equation}
	\label{eq:VI_motivation_dual}
		\text{Find }q \in \partial j(0):\qquad 
		\dual{p-q}{A^{-1}(q-f)} \ge 0
		\qquad
		\forall p \in \partial j(0).
	\end{equation}
\end{proposition}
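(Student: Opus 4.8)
The plan is to separate the statement into its well-posedness part (existence and uniqueness for \eqref{eq:VI_motivation}) and its duality part (the correspondence with \eqref{eq:VI_motivation_dual}). For the first part I would invoke the classical existence and uniqueness theory for elliptic variational inequalities of the second kind: since $A$ is linear, bounded and coercive and $j$ is proper, convex and lower semicontinuous, \eqref{eq:VI_motivation} possesses a unique solution $y \in V$. This is a standard Lions--Stampacchia-type result which I would simply cite.

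The duality part rests on rewriting \eqref{eq:VI_motivation} as a subdifferential inclusion. Rearranging the terms shows that $y$ solves \eqref{eq:VI_motivation} if and only if $f - A\,y \in \partial j(y)$. Setting $q := f - A\,y$ and using that the coercive, bounded, linear operator $A$ is boundedly invertible, this immediately gives $y = A^{-1}(f - q)$, which is the claimed representation; it then remains to identify the inclusion $q \in \partial j(y)$ with the variational inequality \eqref{eq:VI_motivation_dual}.

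The key step exploits the positive homogeneity of $j$. First I would record the standard consequences that $j(0) = 0$, that the set $\partial j(0)$ is nonempty, closed and convex, and that $j$ is the support function of $\partial j(0)$, so that the Fenchel conjugate $j^\star$ is the indicator function of $\partial j(0)$ in the sense of convex analysis, i.e., equal to $0$ on $\partial j(0)$ and to $+\infty$ elsewhere. By the Fenchel--Young identity together with the reflexivity of the Hilbert space $V$, the inclusion $q \in \partial j(y)$ is then equivalent to $y \in \partial j^\star(q) = N_{\partial j(0)}(q)$, the normal cone of $\partial j(0)$ at $q$. Spelling out this normal-cone condition as $\dual{p - q}{y} \le 0$ for all $p \in \partial j(0)$ and substituting $y = A^{-1}(f - q)$ reproduces exactly \eqref{eq:VI_motivation_dual}, with the feasibility $q \in \partial j(0)$ built in. Since every implication here is an equivalence, this shows that $q$ solves \eqref{eq:VI_motivation_dual} precisely when $y = A^{-1}(f - q)$ solves \eqref{eq:VI_motivation}; combined with the well-posedness of the primal problem, this yields both the existence and the uniqueness of $q$.

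For an independent uniqueness argument one may also proceed directly: the coercivity and boundedness of $A$ transfer to $A^{-1}$ in the form $\dual{u}{A^{-1}u} \ge (c/\norm{A}^2)\,\norm{u}_{V\dualspace}^2$, and testing \eqref{eq:VI_motivation_dual} for two solutions $q_1$ and $q_2$ against one another and adding the two inequalities gives $\dual{q_2 - q_1}{A^{-1}(q_2 - q_1)} \le 0$, whence $q_1 = q_2$. The hard part will be the bookkeeping in the third step: establishing that $j^\star$ is the indicator of $\partial j(0)$, invoking Fenchel--Young on the correct (reflexive) space to pass to the normal cone, and tracking the orientation of the dual pairing and of the inequalities so that the substitution lands exactly on \eqref{eq:VI_motivation_dual} rather than on a sign-reversed variant.
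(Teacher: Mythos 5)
Your argument is correct, but it is organized in the opposite direction from the paper's. The paper first establishes unique solvability of the \emph{dual} problem \eqref{eq:VI_motivation_dual} — viewing it as a $V\dualspace$-elliptic variational inequality of the first kind over the non-empty, closed, convex set $\partial j(0)$ and citing the Lions--Stampacchia/Kinderlehrer--Stampacchia theorem — and then transfers existence and uniqueness to \eqref{eq:VI_motivation} through the equivalence ``$y$ solves \eqref{eq:VI_motivation} $\Leftrightarrow$ $q := f - A\,y$ solves \eqref{eq:VI_motivation_dual}''. You instead invoke the existence theory for second-kind inequalities on the primal side and push the solution down to the dual problem, adding a separate coercivity argument for the uniqueness of $q$ (which the paper gets for free from the first-kind theory). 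The duality step itself is the same mechanism in two dresses: the paper characterizes $q \in \partial j(y)$ directly as ``$q \in \partial j(0)$ and $\dual{q}{y} = j(y)$'' and uses that $j$ is the support function of $\partial j(0)$, whereas you pass through the Fenchel conjugate $j^\star = \delta_{\partial j(0)}$ and the inversion $q \in \partial j(y) \Leftrightarrow y \in \partial j^\star(q) = \NN_{\partial j(0)}(q)$; both hinge on positive homogeneity, and your normal-cone condition $\dual{p-q}{y} \le 0$ is literally the paper's line \eqref{eq:equivalencies_2}. The signs work out: substituting $y = A^{-1}(f-q)$ into $\dual{p-q}{y} \le 0$ gives exactly $\dual{p-q}{A^{-1}(q-f)} \ge 0$. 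The paper's ordering has the small advantage that it needs only the (more elementary) first-kind existence theorem and that the Lipschitz dependence of $y$ on $f$ drops out of the same computation; your ordering is equally valid and arguably closer to how the dual problem is discovered in the first place.
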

\begin{proof}
	The operator $A^{-1} : V\dualspace \to V$ is trivially linear, bounded and coercive, and the subdifferential $\partial j(0)$ is non-empty, closed and convex.
	Thus, \eqref{eq:VI_motivation_dual} is a $V\dualspace$-elliptic variational inequality of the first kind.
	The classical result \cite[Theorem~II.2.1]{KinderlehrerStampacchia1980} implies that \eqref{eq:VI_motivation_dual} possesses a unique solution.

	Hence, it suffices to show that
	$y$ solves \eqref{eq:VI_motivation}
	if and only if $q := f - A \, y$ solves \eqref{eq:VI_motivation_dual}.
	Now we have the equivalencies
	\begin{align}
		\label{eq:equivalencies_0}
		\text{$y$ solves \eqref{eq:VI_motivation}} \quad\Leftrightarrow\quad& 
		q \in \partial j(y)
		\\
		\label{eq:equivalencies_1}
		\Leftrightarrow \quad &
		q \in \partial j(0) \;\text{and}\; \dual{q}{y} = j(y)
		\\
		\label{eq:equivalencies_2}
		\Leftrightarrow \quad &
		q \in \partial j(0) \;\text{and}\; \dual{q - p}{y} \ge 0 \qquad\forall p \in \partial j(0)
		\\
		\notag
		\Leftrightarrow \quad &
		\text{$q$ solves \eqref{eq:VI_motivation_dual}}
		.
	\end{align}
	The equivalence between \eqref{eq:equivalencies_0} and \eqref{eq:equivalencies_1}
	follows from the positive homogeneity of $j$.
	Further, ``\eqref{eq:equivalencies_2} $\Rightarrow$ \eqref{eq:equivalencies_1}''
	can be seen as follows.
	Using again the positive homogeneity of $j$, we have
	\begin{equation*}
		j(y) = \sup_{p \in \partial j(0)} \dual{p}{y},
	\end{equation*}
	see \cite[Proposition~16.18]{BauschkeCombettes2011}.
	This means that \eqref{eq:equivalencies_2} implies
	$j(y) = \dual{q}{y}$ and \eqref{eq:equivalencies_1} follows.
	The remaining implications above are obvious and this finishes the proof.
\end{proof}
Note that the above proof shows that the mapping
$f \mapsto y$ is Lipschitz continuous as a function from $V\dualspace$ to $V$.

The identity $y = A^{-1}(f-q)$ in \cref{proposition:SolvabilityDuality} implies that, if we are interested in the differentiability properties of the solution map $S : V\dualspace \to V$, $f \mapsto y$, associated with \eqref{eq:VI_motivation}, then we may equivalently study the differentiability properties of the solution operator $T : V\dualspace \to V\dualspace$, $f \mapsto q$, associated with \eqref{eq:VI_motivation_dual}.  Note that the latter is seemingly easier since \eqref{eq:VI_motivation_dual} is a variational inequality of the first kind and since differentiability results for variational inequalities of the first kind are well known, cf.\ \cite{Mignot1976,Haraux1977}. These classical results, however, are only applicable if the admissible set of the problem at hand satisfies the condition of polyhedricity, cf.\ \cite[Definition 3.51]{BonnansShapiro2000}.

\begin{definition}
\label{def:polyhedricity}
Suppose that $K$ is a closed, convex and non-empty subset of a Banach space $W$. Then $K$ is said to be polyhedric at a point $w \in K$ w.r.t.\ $\eta \in \NN_K(w)$ if
\begin{equation*}
\TT_K(w) \cap \eta\anni = \overline{ \RR_K(w) \cap \eta\anni }.
\end{equation*}
Here,
	\begin{equation*}
		\RR_K(w) := \R^+ \left ( K - w\right ),\quad 
		\TT_K(w)
		:=
		\overline{ \RR_K(w)}
		,\quad\text{and}\quad
		\NN_K (w) 
		:= \TT_K(w)^\circ
	\end{equation*}
	are the radial, the tangent and the normal cone to $K$ at $w$, respectively.
	By $\eta\anni$ we denote the kernel of the functional $\eta \in W\dualspace$ and $C\polar$ denotes the polar cone of a given cone $C \subset W$.
The set $K$ is called polyhedric  if it is polyhedric at every point $w \in K$ w.r.t.\ all  $\eta \in \NN_K(w)$.
\end{definition}

Under the assumption of polyhedricity, it is straightforward to obtain the following extension of \cite[Theorem 4.2]{Hintermueller2017},
see also \cite{Sokolowski1988,SokolowskiZolesio1988,SokolowskiZolesio1992}.

\begin{theorem}
	\label{thm:Differentiablity}
	Let $f \in V\dualspace$ be given. We denote by $y := S(f)$ the solution to \eqref{eq:VI_motivation} and by $q := T(f)$ the solution to \eqref{eq:VI_motivation_dual}. Suppose that the set $\partial j(0) \subset V\dualspace$
	is polyhedric at $q$ w.r.t.\ $y \in V \cong V\bidualspace$. Then, the solution operator $S : V\dualspace \to V$ associated with \eqref{eq:VI_motivation} is Hadamard directionally differentiable in $f$ in all directions $g \in V\dualspace$ and
	the directional derivative $\delta := S'(f; g)$ in $f$ in a direction $g$ is uniquely characterized by the variational inequality 
	\begin{equation}
	\label{eq:derivative_VI}
	\text{Find }
	\delta \in 
	\bigh(){ \TT_{\partial j(0)}(q) \cap y\anni }\polar
	:
	\qquad
	\dual{A \, \delta - g}{v - \delta}
	\ge
	0
	\qquad \forall v \in \bigh(){ \TT_{\partial j(0)}(q) \cap y\anni }\polar.
	\end{equation}
\end{theorem}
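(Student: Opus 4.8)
The plan is to reduce the claim to the known directional differentiability of solution maps of variational inequalities of the first kind, and then to translate the resulting derivative problem into \eqref{eq:derivative_VI}. The starting point is the identity $y = A^{-1}(f-q)$ from \cref{proposition:SolvabilityDuality}, which shows that $S(f) = A^{-1}\bigl(f - T(f)\bigr)$. Since $A^{-1}\colon V\dualspace \to V$ is linear and bounded, it is Hadamard (indeed Fr\'echet) differentiable, and Hadamard directional differentiability is preserved under composition. Hence it suffices to prove that the dual solution map $T\colon V\dualspace \to V\dualspace$, $f \mapsto q$, is Hadamard directionally differentiable, to compute $q' := T'(f;g)$, and to set $\delta := A^{-1}(g - q')$; the chain rule then yields $S'(f;g) = A^{-1}(g-q') = \delta$.

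To differentiate $T$, I would first recast \eqref{eq:VI_motivation_dual} as a variational inequality of the first kind on the Hilbert space $W := V\dualspace$. Introducing the bilinear form $a(u,w) := \dual{w}{A^{-1}u}$ on $W \times W$, problem \eqref{eq:VI_motivation_dual} reads: find $q \in \partial j(0)$ with $a(q-f,p-q) \ge 0$ for all $p \in \partial j(0)$. The boundedness and coercivity of $a$ follow directly from those of $A$; writing $v = A^{-1}w$, one has $a(w,w) = \dual{Av}{v} \ge c\,\norm{v}_V^2 \ge c\,\norm{A}^{-2}\,\norm{w}_{V\dualspace}^2$. The optimality condition identifies the associated normal direction: since $A^{-1}(q-f) = -y$, \eqref{eq:VI_motivation_dual} is equivalent to $\dual{p-q}{y} \le 0$ for all $p \in \partial j(0)$, so that $y \in \NN_{\partial j(0)}(q)$ under the identification $V \cong V\bidualspace = W\dualspace$. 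Thus the assumption that $\partial j(0)$ is polyhedric at $q$ w.r.t.\ $y$ is precisely the polyhedricity condition required by the classical differentiability theorem for first-kind variational inequalities, cf.\ \cite[Theorem~4.2]{Hintermueller2017} (in the coercive, not necessarily symmetric, case). Invoking this result, $T$ is Hadamard directionally differentiable and $q' = T'(f;g)$ is the unique solution of the critical-cone problem: find $q' \in \mathcal{C}$ with $a(q'-g,p-q') \ge 0$ for all $p \in \mathcal{C}$, where $\mathcal{C} := \TT_{\partial j(0)}(q) \cap y\anni \subset V\dualspace$.

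It remains to translate this derivative problem into \eqref{eq:derivative_VI}. Writing the critical-cone problem as $\dual{p-q'}{A^{-1}(q'-g)} \ge 0$ and using $A^{-1}(q'-g) = -\delta$, it becomes $\dual{p-q'}{\delta} \le 0$ for all $p \in \mathcal{C}$, i.e.\ $\delta \in \NN_{\mathcal{C}}(q')$. Since $\mathcal{C}$ is a closed convex cone, this is equivalent to the complementarity relations $\delta \in \mathcal{C}\polar$, $q' \in \mathcal{C}$ and $\dual{q'}{\delta} = 0$. Using $A\,\delta - g = -q'$ together with $\dual{q'}{\delta} = 0$, one computes for every $v \in \mathcal{C}\polar$ that $\dual{A\,\delta - g}{v-\delta} = -\dual{q'}{v} \ge 0$, where the sign follows from $q' \in \mathcal{C}$ and $v \in \mathcal{C}\polar$. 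Hence $\delta \in \mathcal{C}\polar = \bigl(\TT_{\partial j(0)}(q) \cap y\anni\bigr)\polar$ solves \eqref{eq:derivative_VI}. As \eqref{eq:derivative_VI} is a first-kind variational inequality governed by the coercive operator $A$ on the nonempty closed convex cone $\mathcal{C}\polar \subset V$, it admits a unique solution, so $\delta = S'(f;g)$ is uniquely characterized by it.

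The routine parts are the stability of Hadamard differentiability under composition with $A^{-1}$ and the boundedness and coercivity of $a$. The step requiring the most care -- and the main obstacle -- is the application of the classical polyhedricity-based differentiability theorem in the present dual setting: one must verify that the theorem is available for the possibly non-symmetric coercive form $a$, correctly identify the normal direction as $y$ and the critical cone as $\TT_{\partial j(0)}(q) \cap y\anni$ under the reflexive identifications $V \cong V\bidualspace$ and $\mathcal{C}\polar \subset V$, and keep careful track of the two dual pairings throughout the transfer argument.
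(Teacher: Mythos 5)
Your proposal is correct and follows essentially the same route as the paper: reduce to the dual first-kind variational inequality \eqref{eq:VI_motivation_dual} via $S(f)=A^{-1}(f-T(f))$, apply the classical Mignot--Haraux polyhedricity-based differentiability result to $T$ with critical cone $\TT_{\partial j(0)}(q)\cap y\anni$, and translate the resulting derivative problem into \eqref{eq:derivative_VI} through the complementarity relations $\delta\in\bigl(\TT_{\partial j(0)}(q)\cap y\anni\bigr)\polar$ and $\dual{T'(f;g)}{\delta}=0$. The only cosmetic difference is that you obtain Hadamard differentiability directly from the cited first-kind result, whereas the paper derives it from directional differentiability plus the global Lipschitz continuity of $S$ via \cite[Proposition~2.49]{BonnansShapiro2000}; both are standard and equivalent here.
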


\begin{proof}
Since $\partial j(0)$ is polyhedric at $q$ w.r.t.\ $y = A^{-1}(f - q)$, we may employ the classical results of Mignot and Haraux, see \cite[Théorème 2.1]{Mignot1976} and \cite[Theorem 2]{Haraux1977}, to deduce that the map $T : V\dualspace \to V\dualspace$ is directionally differentiable in $f$ in all directions $g \in V\dualspace$, and that the directional derivative $\eta := T'(f;g)$ in $f$ in a direction $g$ is uniquely characterized by the variational inequality 
	\begin{equation}
	\label{eq:derivative_VI_dual}
		\text{Find }\eta \in \TT_{\partial j(0)}(q) \cap y\anni:\qquad 
		\dual{z - \eta }{A^{-1}( \eta - g)} \ge 0
		\qquad
		\forall z \in \TT_{\partial j(0)}(q) \cap y\anni.
	\end{equation}
From the relation between the solutions to \eqref{eq:VI_motivation} and \eqref{eq:VI_motivation_dual}, it is evident that the map $S : V\dualspace \to V$ is directionally differentiable in $f$ in all directions $g \in V\dualspace$ with $S'(f; g) = A^{-1}( g - T'(f; g))$.
Next, we check \eqref{eq:derivative_VI}. To this end, fix a direction $g \in V\dualspace$ and write $\delta := S'(f;g)$, $\eta := T'(f; g)$. Then, \eqref{eq:derivative_VI_dual} and the identity $\delta = A^{-1}(g - \eta)$ imply
\begin{equation*}
	\dual{z - \eta }{\delta} \le 0
	\qquad
	\forall z \in \TT_{\partial j(0)}(q) \cap y\anni.
\end{equation*}
Hence,
\begin{equation*}
	\delta \in
	\bigh(){ \TT_{\partial j(0)}(q) \cap y\anni }\polar
	\quad\text{and}\quad
	\dual{\eta }{\delta} = 0.
\end{equation*}
For all
$v \in \bigh(){ \TT_{\partial j(0)}(q) \cap y\anni }\polar$,
we obtain on the other hand
\begin{equation*}
	\dual{g - A \, \delta}{v - \delta}
	=
	\dual{\eta}{v - \delta}
	=
	\dual{\eta}{v}
	\le
	0.
\end{equation*}
Combining the above yields \eqref{eq:derivative_VI}.
Finally, the Hadamard directional differentiability of $S$ follows from \cite[Proposition~2.49]{BonnansShapiro2000} and the global Lipschitz continuity of $S$.
\end{proof}

\cref{thm:Differentiablity} illustrates that it makes sense to study the polyhedricity of the set $\partial j(0)$.
In applications, the functional $j$ often takes the form 
$j(v) = \norm{G v}_{L^1(X, \mu)}$,
where $G : V \to L^1(X, \mu)$ is a bounded linear mapping
and $(X, \Sigma, \mu)$ is a $\sigma$-finite measure space, cf., e.g., \cite{Sokolowski1988}.
For such a function $j$, the chain rule for subdifferentials, see \cite[Proposition 5.7]{Ekeland1976}, implies
\begin{equation}
\label{eq:L1subdifferential}
	\partial j(0)
	=
	G\dualspace \, \partial \norm{\cdot}_{L^1(X, \mu)}(0),
\end{equation}
where
\begin{equation*}
	\partial \norm{\cdot}_{L^1(X, \mu)}(0)
	=
	\{ v \in L^\infty(X, \mu) \mid -1 \le v \le 1 \ \mu\text{-a.e.\ in }X \} =:M.
\end{equation*}
In particular, the polyhedricity condition in \cref{thm:Differentiablity} becomes
\begin{equation*}
\TT_{G\dualspace M}(q) \cap y\anni = \overline{\RR_{G\dualspace M}(q) \cap y\anni},
\end{equation*}
so that we indeed end up with the same assumption as in \cite[Section 4]{Hintermueller2017}. To get an intuition for the structure of the set in \eqref{eq:L1subdifferential}, let us consider two  tangible examples. 

First, let $\Omega \subset \R^d$ be a bounded, open set. Set $V := H_0^1(\Omega)$, $X := \Omega$ and $\mu := \LL^d$, and suppose that 
 $G$ is the injection of $H_0^1(\Omega)$ into $L^1(\Omega)$.
Then, 
\begin{equation*}
	\partial j(0)
	=
	\{ v \in L^\infty(\Omega) \mid -1 \le v \le 1 \ \LL^d\text{-a.e.\ in } \Omega \}
	\subset
	H^{-1}(\Omega)
\end{equation*}
and we arrive at a set that has precisely the form \eqref{eq:BoundSetDual}.

For the second example, assume that $\Omega \subset \R^d$ is a bounded domain  
with a Lipschitz boundary and set $V := H^1(\Omega)$, $X := \partial\Omega$ and $\mu := \SSS^{d-1}$, where $\SSS^{d-1}$ is the boundary measure on $\partial \Omega$. Consider the function $G: H^1(\Omega) \to L^1(\partial \Omega)$, $v \mapsto \tr(v)$, where $\tr$ denotes the trace operator $\tr: H^1(\Omega) \to H^{1/2}(\partial \Omega)$.
Then,
\begin{equation*}
	\partial j(0)
	=
	\{ v \in L^\infty(\partial\Omega) \mid -1 \le v \le 1 \ \SSS^{d-1}\text{-a.e.\ on } \partial\Omega \}
	\subset
	H^{1}(\Omega)\dualspace.
\end{equation*}
Via the trace operator, we may identify the set $H^{-1/2}(\partial \Omega)$ with a closed subspace of $H^{1}(\Omega)\dualspace$ (namely, $\tr\dualspace H^{-1/2}(\partial \Omega)$). Further, we have the following result.

\begin{lemma}
	\label{lem:polyhedric_in_subspace}
	Let $W$ be a Banach space and let $U \subset W$ be a closed subspace of $W$.
	Then, a closed, convex and non-empty set $K \subset U$
	is polyhedric as a subset of the Banach space $W$
	if and only if $K$ is polyhedric as a subset of the Banach space $U$.
\end{lemma}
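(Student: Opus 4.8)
The plan is to exploit the fact that every cone appearing in \cref{def:polyhedricity} is, with the sole exception of the normal cone, intrinsic to the subspace $U$, while the normal cones in $W\dualspace$ and in $U\dualspace$ are linked through the restriction map $W\dualspace \to U\dualspace$, $\eta \mapsto \eta|_U$, together with the Hahn--Banach theorem. I will write $\NN_K^W(w)$ and $\NN_K^U(w)$ for the normal cone computed in $W$ and in $U$, respectively, and use the same convention for the other cones.

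First I would record that the radial cone $\RR_K(w) = \R^+(K - w)$ is a purely algebraic object; since $K \subset U$ and $U$ is a linear subspace, $\RR_K(w) \subset U$, and this cone is literally the same set whether $K$ is viewed inside $U$ or inside $W$. For the tangent cone $\TT_K(w) = \overline{\RR_K(w)}$ I would invoke that $U$ is \emph{closed} in $W$: the norm topology of $U$ is the subspace topology, so the $W$-closure of any $S \subset U$ is contained in $\overline{U} = U$ and hence coincides with its $U$-closure. Consequently $\TT_K(w)$ is also the same set in both spaces and lies in $U$.

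The heart of the argument is the correspondence between normal cones. For $\eta \in W\dualspace$, the membership $\eta \in \NN_K^W(w) = \TT_K^W(w)\polar$ only tests $\eta$ against vectors of $\TT_K(w) \subset U$, so it depends solely on $\tilde\eta := \eta|_U$; explicitly $\eta \in \NN_K^W(w) \iff \tilde\eta \in \NN_K^U(w)$. Combined with the Hahn--Banach extension theorem, which guarantees that every $\tilde\eta \in U\dualspace$ is the restriction of some $\eta \in W\dualspace$, this shows that the restriction map carries $\NN_K^W(w)$ \emph{onto} $\NN_K^U(w)$. In the same vein, because $\TT_K(w), \RR_K(w) \subset U$, the intersections with the kernel satisfy $\TT_K(w) \cap \eta\anni = \TT_K(w) \cap \tilde\eta\anni$ and $\RR_K(w) \cap \eta\anni = \RR_K(w) \cap \tilde\eta\anni$. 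Together with the coincidence of closures this yields the crucial observation: the polyhedricity identity at $w$ holds with respect to $\eta \in \NN_K^W(w)$ (computed in $W$) if and only if it holds with respect to $\tilde\eta = \eta|_U \in \NN_K^U(w)$ (computed in $U$).

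With this observation both implications are immediate. For ``$W$-polyhedric $\Rightarrow$ $U$-polyhedric'' I fix $w \in K$ and $\tilde\eta \in \NN_K^U(w)$, extend it via Hahn--Banach to some $\eta \in \NN_K^W(w)$, apply $W$-polyhedricity at $w$ w.r.t.\ $\eta$, and transfer the identity back to $U$. For the converse I fix $\eta \in \NN_K^W(w)$, restrict it to $\tilde\eta \in \NN_K^U(w)$, apply $U$-polyhedricity, and transfer the identity forward to $W$. I expect the only point requiring genuine care to be the normal-cone correspondence, and in particular the surjectivity of the restriction onto $\NN_K^U(w)$, which rests on Hahn--Banach; the identification of the radial and tangent cones is a routine consequence of $U$ being a closed subspace.
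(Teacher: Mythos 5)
Your proof is correct, and its engine is the same as the paper's: both arguments rest on the observations that $\RR_K(w)$ and $\TT_K(w)$ are subsets of $U$ and are literally the same sets in either ambient space (closedness of $U$ giving the coincidence of closures), and that the two dual spaces are linked by restriction in one direction and Hahn--Banach extension in the other. The one structural difference is how the normal cone is handled. The paper sidesteps it entirely by citing \cite[Lemma~4.1]{Wachsmuth2016:2}, which says that polyhedricity is equivalent to the identity $\TT_K(w) \cap \eta\anni = \overline{\RR_K(w) \cap \eta\anni}$ holding for \emph{all} functionals $\eta$ in the dual space, not just those in $\NN_K(w)$; after that reduction there is nothing left to check about normal cones, and the restriction/extension correspondence between $U\dualspace$ and $W\dualspace$ finishes the proof. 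You instead keep the quantifier over $\eta \in \NN_K(w)$ and prove directly that the restriction map carries $\NN_K^W(w)$ onto $\NN_K^U(w)$, which is a correct and easy consequence of $\TT_K(w) \subset U$ together with Hahn--Banach. Your version is self-contained and costs only one extra (routine) observation; the paper's version is shorter at the price of importing an external lemma. Either is a complete proof.
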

\begin{proof}
	Since $U$ and $W$ are equipped with the same norm,
	the tangent cones to $K$ in $U$ and $W$ coincide.
	According to \cite[Lemma~4.1]{Wachsmuth2016:2}, to establish the claim of the lemma, it suffices to show that the statements
	\begin{equation}
		\label{eq:poly_in_U}
		\TT_K(w) \cap \lambda\anni
		=
		\overline{ \RR_K(w) \cap \lambda\anni}
		\qquad\forall \lambda \in U\dualspace
	\end{equation}
	and
	\begin{equation}
		\label{eq:poly_in_W}
		\TT_K(w) \cap \eta\anni
		=
		\overline{\RR_K(w) \cap \eta\anni}
		\qquad\forall \eta\in W\dualspace
	\end{equation}
	are equivalent for all $w \in K$.
	We first prove ``\eqref{eq:poly_in_U} $\Rightarrow$ \eqref{eq:poly_in_W}''. Suppose that an $\eta \in W\dualspace$ is given. Then,  $\lambda := \eta|_U$ is in  $U\dualspace$, and it holds 
	$\TT_K(w), \RR_K(w) \subset U$. Consequently, 
	\begin{equation*}
		\TT_K(w) \cap \eta\anni
		=
		\TT_K(w) \cap \lambda\anni
		=
		\overline{
		\RR_K(w) \cap \lambda\anni}
		=
		\overline{
		\RR_K(w) \cap \eta\anni}.
	\end{equation*}
	This proves  \eqref{eq:poly_in_W}. To obtain ``\eqref{eq:poly_in_W} $\Rightarrow$ \eqref{eq:poly_in_U}'', we can proceed along exactly the same lines by
	using the theorem of Hahn-Banach to extend $\lambda \in U\dualspace$ to a functional $\eta \in W\dualspace$.
\end{proof}

The above lemma shows that it does not make any difference whether we discuss the polyhedricity of the set $L^\infty(\partial\Omega, [-1,1])$
in the space $H^1(\Omega)\dualspace$ or in its closed subspace $\tr\dualspace H^{-1/2}(\partial \Omega) \subset H^1(\Omega)\dualspace$. Since the spaces 
\begin{equation*}
(\tr\dualspace H^{-1/2}(\partial \Omega), \|\cdot \|_{H^1(\Omega)\dualspace})\quad \text{and}\quad (H^{-1/2}(\partial \Omega), \|\cdot \|_{H^{-1/2}(\partial \Omega)})
\end{equation*}
 are isomorphic, cf.\ the inverse trace theorem, we may simplify the situation further and confine ourselves to studying the set  $L^\infty(\partial\Omega, [-1,1])$ as a subset of $H^{-1/2}(\partial \Omega)$. This is again the setting in \eqref{eq:BoundSetDual}. Note that the situation that we have considered here is exactly that studied in \cite[Section 4.5]{SokolowskiZolesio1992}. 

The above examples demonstrate that, if we want to  apply \cref{thm:Differentiablity} to a variational inequality of the second kind involving an $L^1(X, \mu)$-norm, then we typically have to check the condition of polyhedricity for a set of the form  \eqref{eq:BoundSetDual} in the dual of the underlying space $V$. Compare with  \cite{Sokolowski1988,SokolowskiZolesio1988,SokolowskiZolesio1992,Hintermueller2017,DelosReyes2016} in this context. The problem is that a set of the form \eqref{eq:BoundSetDual} is typically not polyhedric when considered as a subset of a dual space. This unfortunate and quite counterintuitive fact is proved in the following section.

\section{Sets with bounds in dual spaces}
\label{sec:setswithbounds}

\noindent
Let $(X, \Sigma, \mu)$ be a $\sigma$-finite measure space and suppose that $V$ is a Banach space which embeds densely into $L^1(X, \mu)$. We are interested in the non-polyhedricity of the set
\begin{equation*}
	M
	:=
	\{v \in L^\infty(X, \mu) : -1 \le v \le 1 \ \mu\text{-a.e.\ in } X \}
\end{equation*}
as a subset of the dual space $V\dualspace$. Note that $M$ is trivially convex, non-empty and closed in $V\dualspace$ (since we may again identify $M$ with a subdifferential, cf.\ \cref{sec:motivation}).

In order to check that $M$ is not polyhedric,
it suffices to find a
$q \in M$ and a  $y \in V$ such that $y \in \NN_M(q) \subset V\bidualspace$ and 
\begin{equation}
	\label{eq:counterexample}
	\exists
	\nu \in \TT_M(q) \cap y\anni
	\ \text{with} \ 
	\nu \not\in \overline{\RR_M(q) \cap y\anni }
	.
\end{equation}
To this end,
we suppose that 
a tuple $(q,y) \in M \times V$ is given such that $y \in \NN_M(q)$ and such that 
there exists an $O \in \Sigma$
with  $\mu(O \cap \{y = 0\})=0$ and $\mu(O) > 0$.
From the definition of $M$ and $y \in \NN_M(q)$, we obtain that it holds  $q = \sign(y)$ $\mu$-a.e.\ in $O$. 
Now, for any $\eta \in \RR_M(q)$ with $\eta \, \chi_O \neq 0$ in $L^\infty(X, \mu)$ (where $\chi_O : X \to \{0,1\}$ is the characteristic function of the set $O$)
we have
\begin{equation*}
	\dual{\eta}{y}
	=
	\int_X \eta\, y \, \d \mu
	=
	-\int_X \abs{\eta} \, \abs{y} \, \d \mu
	\le
	-\int_O \abs{\eta} \, \abs{y} \, \d \mu
	< 0.
\end{equation*}
This shows that, to prove the non-polyhedricity of the set $M$, it is enough to construct a 
$\nu \in \TT_M(q) \cap y\anni$
with the property
that for each $\{\nu_n\} \subset L^\infty(X, \mu)$
with $\nu_n \to \nu$ in $V\dualspace$
we have $\nu_n \, \chi_O \ne 0$ for $n$ large enough.
Indeed, for such a $\nu$,
any sequence
$\{\nu_n\} \subset \RR_M(q)$
with $\nu_n \to \nu$ in $V\dualspace$
satisfies $\dual{\nu_n}{y} < 0$ for $n$ large enough so that $\nu_n \not \in \RR_M(q) \cap y\anni$.

In the following two sections,
we demonstrate how a $\nu$ with the above properties can be constructed if $V$ is a Dirichlet space or if $V$ embeds into the space $C_b(X)$.

\subsection{Dirichlet spaces}
\label{subsec:DirichletSpaces}
In this section, we study the non-polyhedricity of the set \eqref{eq:BoundSetDual} in the dual $V\dualspace$ of a Dirichlet space $V$. 
For convenience, we begin by briefly recalling the assumptions typically made in the Dirichlet space setting, see \cite{FukushimaOshimaTakeda2011} for more details.

In what follows, we assume that $X$ is a locally compact and separable metric space.
We denote the  Borel $\sigma$-algebra of $X$ with $\BB$ and assume that 
  $\mu: \BB \to [0, \infty]$ is a   Borel measure 
such that $\mu(O) > 0$ holds for all non-empty open sets $O \subset X$
and such that
$\mu(K) < \infty$ holds for all compact sets $K \subset X$.
Note that the  measure $\mu$ is automatically regular 
since $X$ is $\sigma$-compact, cf.\ \cite[Theorem~2.18]{Rudin1987}.

Now, a subspace $V \subset L^2(X, \mu)$ is called a Dirichlet space
if there exists a symmetric, bilinear form $\EE : V \times V \to \R$
such that
\begin{itemize}
	\item
		$V$ is dense in $L^2(X, \mu)$ and $\EE(u,u) \ge 0$ for all $u \in V$,
	\item
		$V$ is a Hilbert space  when endowed with the product
		$\EE_1(\cdot, \cdot ) = \EE(\cdot, \cdot) + \scalarprod{\cdot}{\cdot}_{L^2}$,		
	\item
		for all $u \in V$ it holds
		$v := \min(1, \max(0, u)) \in V$
		with
		$\EE(v,v) \le \EE(u,u)$.
\end{itemize}
A Dirichlet space $V$ is said to be \emph{regular}
if
\begin{itemize}
	\item
		$V \cap C_c(X)$ is dense in $V$
		w.r.t.\ the norm induced by $\EE_1$,
	\item
		$V \cap C_c(X)$ is dense in $C_c(X)$
		w.r.t.\ the supremum norm.
\end{itemize}
Recall that 
\begin{align*}
	C_c(X) &:= \{ \psi \in C(X) \mid \supp(\psi) \text{ is compact}\}, \\
	C_0(X) &:= \{ \psi \in C(X) \mid \forall \varepsilon > 0 : \exists \text{ compact } K \subset X : \abs{\psi} \le \varepsilon \text{ on } X \setminus K\}.
\end{align*}
Note that $C_c(X)$ can be replaced by $C_0(X)$ in the definition of a regular Dirichlet space
since $C_0(X)$ is the completion of $C_c(X)$ in the supremum norm.

As an important example,
we mention that, for an open, bounded set $\Omega \subset \R^d$,
the Sobolev space $V = H_0^1(\Omega)$ is a regular Dirichlet space
with $\EE(u,v) = \int_\Omega \nabla u \cdot \nabla v\,\d x$.
However, $H_0^2(\Omega)$ is not a Dirichlet space
since $\min(1, \max(0, u))$ is typically not an element of $H_0^2(\Omega)$ for a given $u \in H_0^2(\Omega)$.

Let us recall some details concerning
capacity theory in Dirichlet spaces,
see \cite[Chapter~2]{FukushimaOshimaTakeda2011}.
For an arbitrary set $A \subset X$, we define
the capacity of $A$ by
\begin{equation*}
	\capa(A)
	:=
	\inf
	\bigh\{\}{
		\EE_1(u,u)
		\mid
		u \ge 1 \text{ $\mu$-a.e.\ on an open neighborhood of } A
	}.
\end{equation*}
We say that a pointwise property holds quasi-everywhere (q.e.)
if it holds up to a set of capacity zero. 
A function $u : X \to \R$ is called
quasi-continuous
if
for every $\varepsilon > 0$
there is an open set $G \subset X$
with $\capa(G) < \varepsilon$ and $u |_{X \setminus G} \in C(X \setminus G)$.
Note that every $v \in V$
possesses a quasi-continuous representative which is unique up to sets of capacity zero,
see \cite[Theorem~2.1.3]{FukushimaOshimaTakeda2011}.
In the sequel, we will always work with the quasi-continuous
representatives of the involved functions. We begin our analysis with the following result.
\begin{theorem}
	\label{thm:dirichlet}
	Let $V$ be a regular Dirichlet space which embeds densely into $L^1(X, \mu)$. Assume that a tuple $(q, y) \in M \times  V$ is given such that  $y \in \NN_M(q) \subset V\bidualspace \cong V$ 	and such that there exists an open set $O \subset X$ with $\mu(O \cap \{y = 0\}) = 0$ and $\capa(O \cap \{y = 0\}) > 0$.
	Then, $M \subset V\dualspace$ is not polyhedric in $q$ w.r.t.\ $y$. 
\end{theorem}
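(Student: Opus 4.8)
The plan is to construct, for the given pair $(q,y)$, an element $\nu \in \TT_M(q) \cap y\anni$ witnessing the failure of the defining identity in \cref{def:polyhedricity}. By the reduction carried out before the theorem, it suffices to produce such a $\nu$ with the additional property that every sequence $\{\nu_n\}\subset L^\infty(X,\mu)$ with $\nu_n\to\nu$ in $V\dualspace$ satisfies $\nu_n\chi_O\ne0$ for $n$ large. The natural candidate is a capacitary measure. First I would pass to a compact set: since $\{y=0\}$ is quasi-closed, $A:=O\cap\{y=0\}$ is quasi-Borel, and $\capa(A)>0$ together with Choquet capacitability yields a compact $K\subset A$ with $\capa(K)>0$. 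Writing $e_K\in V$ for the capacitary potential of $K$ and $\nu_K\in V\dualspace$ for the associated equilibrium measure, characterised by $\EE_1(e_K,v)=\int\tilde v\,\d\nu_K$ for all $v\in V$, the functional $\nu_K$ is a nonzero positive measure carried by $K$ that does not charge polar sets. I would then set $\nu:=s\,\nu_{K'}$ for a suitable compact $K'\subset K$ and a sign $s\in\{-1,+1\}$ to be fixed below.

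Two of the three required properties are then straightforward. Since $K'\subset K\subset\{y=0\}$, the quasi-continuous representative of $y$ vanishes q.e.\ on $K'$; as $\nu$ does not charge polar sets this gives $\dual{\nu}{y}=\int\tilde y\,\d\nu=0$, i.e.\ $\nu\in y\anni$. For the non-approximability I would invoke the regularity of $V$: for the compact set $K'$ inside the open set $O$ there is a cut-off $\phi\in V\cap C_c(X)$ with $\supp\phi\subset O$ and $\phi=1$ on $K'$, whence $\dual{\nu}{\phi}=s\,\nu_{K'}(K')=s\,\capa(K')\ne0$. If some $\nu_n\in L^\infty(X,\mu)$ had $\nu_n\chi_O=0$, then $\dual{\nu_n}{\phi}=\int\nu_n\phi\,\d\mu=0$ because $\phi$ vanishes off $O$; hence $\dual{\nu_n}{\phi}\not\to\dual{\nu}{\phi}$ and no such sequence can converge to $\nu$ in $V\dualspace$. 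This is exactly the property isolated before the theorem, so $\nu\notin\overline{\RR_M(q)\cap y\anni}$.

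The remaining and decisive point is to show $\nu\in\TT_M(q)$, and here the choice of the sign $s$ is essential. Since $V$ is a Hilbert space, $V\dualspace$ is reflexive and the closed convex cone $\TT_M(q)$ equals its bipolar $\NN_M(q)\polar$, so $\nu\in\TT_M(q)$ if and only if $\dual{\nu}{w}\le0$ for every $w\in\NN_M(q)$. A direct computation gives $\NN_M(q)=\{w\in V : q=\sign(w)\ \mu\text{-a.e.\ on }\{w\ne0\}\}$, and combining this with $q=\sign(y)$ on $\{y\ne0\}$ shows that any $w\in\NN_M(q)$ satisfies $w\,y\ge0$ $\mu$-a.e.\ and hence, by quasi-continuity, q.e. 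The crux is to turn this into sign information on the $\mu$-null set $K$, where $y=0$ and the $\mu$-a.e.\ statements carry no content. I would do this with fine potential theory: on the quasi-open set $\{y>0\}$ one has $w\ge0$ $\mu$-a.e.\ and thus $\tilde w\ge0$ q.e., and fine continuity propagates this to the fine closure $\overline{\{y>0\}}^{\mathrm f}$; symmetrically $\tilde w\le0$ q.e.\ on $\overline{\{y<0\}}^{\mathrm f}$. A point of $K$ lying in neither fine closure would possess a fine neighbourhood contained in $\{y=0\}\cap O$; being finely open and $\mu$-null, such a set is polar, so $K$ is covered q.e.\ by the two fine closures. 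By subadditivity of the capacity at least one of $K\cap\overline{\{y>0\}}^{\mathrm f}$ and $K\cap\overline{\{y<0\}}^{\mathrm f}$ has positive capacity; choosing $K'$ to be a compact subset of that piece and $s=-1$ (respectively $s=+1$) yields $s\,\tilde w\le0$ q.e.\ on $K'$ for all $w\in\NN_M(q)$, hence $\dual{\nu}{w}\le0$ and $\nu\in\TT_M(q)$.

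I expect this last step --- extracting pointwise (q.e.)\ sign information about normal directions $w$ on the capacity-carrying but $\mu$-negligible set $K$ --- to be the main obstacle, as it is precisely where the singular nature of the dual set $M\subset V\dualspace$ is felt and where fine potential theory (quasi-continuity, fine closures, capacitability) cannot be avoided. Once the sign is matched correctly, the bipolar characterisation makes the tangency automatic, and the three properties above realise \eqref{eq:counterexample}, proving that $M$ is not polyhedric at $q$ w.r.t.\ $y$.
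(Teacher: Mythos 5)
Your proposal is correct in its overall architecture and shares the paper's skeleton: reduce to producing a single $\nu \in \TT_M(q)\cap y\anni$ that no $L^\infty$-sequence vanishing on $O$ can approximate, obtain a compact $K \subset O\cap\{y=0\}$ of positive capacity via Choquet, use the equilibrium measure as the witness, and kill the approximability with a cut-off $\varphi \in V\cap C_0(X)$ supplied by regularity. Where you genuinely diverge is the decisive step $\nu \in \TT_M(q)$. The paper argues \emph{primally}: it approximates the equilibrium measure $\eta$ of $K$ by nonnegative $L^2$-functions $\eta_n$ (\cite[Lemma~2.2.2]{FukushimaOshimaTakeda2011}), splits $\eta_n = \eta_n^1 + \eta_n^2$ according to the sign of $y$, extracts weak limits $\eta^1+\eta^2=\eta$ of which at least one, say $\eta^1$, is nonzero, truncates to land in $L^\infty$ so that $-\min(\eta_n^1,M_n) \in \RR_M(q)$, and concludes $-\eta^1 \in \TT_M(q)$ by Mazur. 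You argue \emph{dually}: identify $\NN_M(q) = \{w \in V : q\,w = \abs{w}\ \mu\text{-a.e.}\}$, invoke the bipolar theorem $\TT_M(q) = \NN_M(q)\polar$, and verify the polar inequality by propagating the sign of $w$ from the quasi-open sets $\{y>0\}$, $\{y<0\}$ to their fine closures, which cover $K$ up to a polar set. Both routes are sound, and your sign dichotomy on $K$ (one of the two fine closures meets $K$ in positive capacity) is the exact dual shadow of the paper's dichotomy ($\eta^1$ or $\eta^2$ nonzero). The trade-off: the paper's argument is elementary and stays entirely on the analytic side of \cite{FukushimaOshimaTakeda2011}, and it exhibits the approximating radial directions explicitly; your argument is conceptually cleaner via duality but leans on the fine-topological/probabilistic side of the theory. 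In particular, the fact you use twice --- that a $\mu$-negligible finely open (quasi-open) set is polar, which underlies both the passage from ``$w\ge 0$ $\mu$-a.e.\ on $\{y>0\}$'' to ``$\tilde w \ge 0$ q.e.\ on $\{y>0\}$'' and the q.e.\ covering of $K$ by the two fine closures --- is true for regular Dirichlet forms but is not elementary (it essentially requires the associated Hunt process, or \cite[Section~4.6]{FukushimaOshimaTakeda2011}), and the capacitability of $K \cap \overline{\{y>0\}}^{\mathrm{f}}$ needed before reapplying Choquet deserves a word. If you supply references for these two points, your proof is complete.
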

\begin{proof}
	To establish the claim, it suffices to construct a $\nu$ with the properties in \eqref{eq:counterexample}.
	Since $O \cap \{y = 0\} \in \BB$,
	Choquet's capacity theorem yields that there exists
	a compact set $K \subset O \cap \{y = 0\}$
	with positive capacity,
	see \cite[(2.1.6)]{FukushimaOshimaTakeda2011}.

	In what follows, our aim is to construct a non-zero measure in $V\dualspace$
	which lives on the set $K$. To this end, 
	we define 
	$\KK := \{v \in V \mid v \ge 1 \text{ q.e.\ on } K \}$ and 
	note that the element $w$ in $\KK$ with the smallest $\EE_1$-norm
	satisfies $\capa(K) = \EE_1(w,w)$, see \cite[Theorem~2.1.5]{FukushimaOshimaTakeda2011}.
	Define $\eta := \EE_1(w,\cdot) \in V\dualspace \setminus\{0\}$. Then, it holds
	\begin{equation*}
		\dual{\eta}{v - w}_V \ge 0
		\qquad\forall v \in \KK.
	\end{equation*}
	The above implies that $\eta$ is a positive Radon measure with $\supp(\eta) \subset K$, see \cite[Lemma~2.2.6]{FukushimaOshimaTakeda2011}.
	By invoking \cite[Lemma~2.2.2]{FukushimaOshimaTakeda2011},
	we find that there exists a sequence $\{\eta_n\} \in L^2(X, \mu)$
	with $\eta_n \weakly \eta$ in $V\dualspace$ and $\eta_n \ge 0$ $\mu$-a.e.
	Write
	\begin{equation*}
		\eta_n^1 := \chi_{\{y > 0\}} \, \eta_n,
		\qquad
		\eta_n^2 := \chi_{\{y < 0\}} \, \eta_n.
	\end{equation*}
	Then, for every $v \in V$, we have
	\begin{equation*}
		\dual{\eta_n^i}{v}_{V}
		=
		\scalarprod{\eta_n^i}{v}_{L^2}
		\le
		\scalarprod{\eta_n^i}{\abs{v}}_{L^2}
		\le
		\scalarprod{\eta_n}{\abs{v}}_{L^2}
		\le
		\norm{\eta_n}_{V\dualspace} \, \norm{v}_{V},\qquad i=1,2.
	\end{equation*}
	Hence, the sequences $\{\eta_n^i\}$, $i=1,2$, are bounded in $V\dualspace$, and we may assume 
	w.l.o.g.\ that $\eta_n^i \weakly \eta^i$ holds for some $\eta^i \in V\dualspace$. Since $\eta^1 + \eta^2 = \eta$,
	at least one $\eta^i$ has to be non-zero.
	We assume w.l.o.g.\ that this is the case for $\eta^1$.
	Since $\eta^1,\eta^2 \ge 0$, we infer
	$0 \le \eta^1 \le \eta$.
	In particular, $\supp(\eta^1) \subset \supp(\eta) \subset K$.

	Consider now again the sequence  $\{\eta_n^1\}$ with $\eta_n^1 \weakly \eta^1$ in $V\dualspace$. 
	Since $\eta_n^1 \in L^2(X, \mu)$, we can choose $M_n > 0$
	such that
	$\norm{\max(\eta_n^1 - M_n,0)}_{L^2} \le \frac1n$.
	The latter yields
	\begin{equation*}
		\scalarprod{\min(\eta_n^1, M_n)}{v}_{L^2}
		=
		\scalarprod{\eta_n^1}{v}_{L^2}
		-
		\scalarprod{\max(\eta_n^1 -M_n, 0)}{v}_{L^2}
		\to
		\left \langle \eta^1, v \right \rangle \qquad \forall v \in V.
	\end{equation*}
	Hence,
	$L^\infty(X, \mu) \ni \min(\eta_n^1, M_n) \weakly \eta^1$ as $n \to \infty$.
	Since  $q = 1$ holds $\mu$-a.e.\ on $\{y > 0\}$ and since $\eta_n^1 = \chi_{\{y > 0\}} \, \eta_n$, 
	the functions $-\min(\eta_n^1, M_n)$
	belong to $\RR_M(q)$.
	From Mazur's lemma, it now follows straightforwardly that $-\eta^1 \in \TT_M(q)$. Define $\nu := - \eta^1$. 
	Then, by \cite[Theorem~2.2.2]{FukushimaOshimaTakeda2011},
	we may write the dual pairing between $\nu$ and $y$ as an integral and infer
	\begin{equation*}
		\dual{\nu }{y}
		=
		\int_X y \, \d\nu
		=
		\int_K y \, \d\nu 
		=
		0,
	\end{equation*}
	where we used that $\supp(\nu) \subset K$ and $y = 0$ q.e.\ on $K$.
	This shows
	$\nu \in \TT_M(q) \cap y\anni$.

	Finally, let $\{\nu_n\} \subset L^\infty(X, \mu)$ be an arbitrary sequence with $\nu_n \to \nu$ in $V\dualspace$. To conclude the proof of the non-polyhedricity, 
	we have to show that $\nu_n \, \chi_O \ne 0$ holds for all large enough $n$, cf.\ the discussion at the beginning of this section.
	From Urysohn's lemma, we obtain that there exists a $\hat \varphi \in C_0(X)$ with $\hat \varphi = 1$ on $K$
	and $\hat \varphi = 0$ on $X \setminus O$.
	Since $V$ is a regular Dirichlet space, we may find a $\tilde\varphi \in C_0(X) \cap V$
	with $\norm{\hat \varphi - \tilde\varphi}_{L^\infty} \le 1/3$.
	Now, $\varphi := \min(1, 3 \, \max(\tilde\varphi -1/3, 0)) \in C_0(X) \cap V$
	satisfies $\varphi = 1$ on $K$ and $\varphi = 0$ on $X \setminus O$.
	This implies
	\begin{equation*}
		\int_O \nu_n \, \varphi \, \d \mu
		=
		\scalarprod{\nu_n}{\varphi}_{L^2}
		\to
		\dual{\nu}{\varphi}
		=
		\nu(X)
		\ne 0.
	\end{equation*}
	Hence, $\chi_O \, \nu_n \ne 0$ will hold for $n$ large enough and the proof is complete.
\end{proof}

As a corollary of \cref{thm:dirichlet}, we obtain the following result. 

\begin{theorem}
	\label{thm:general_nonpolyhedric}
	Let $V$ be a regular Dirichlet space which embeds densely into $L^1(X, \mu)$.
	Suppose that for every compact set $K \subset X$ there exist a function $y \in V$ and an open set $O \subset X$ with $K = O \cap \{y = 0\}$ up to a set of zero capacity.
	Assume further that there is a set $A \in \BB$
	with $\mu(A) = 0$ and $\capa(A)>0$.
	Then, $M$ is not polyhedric.
\end{theorem}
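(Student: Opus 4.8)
The plan is to obtain the statement as a corollary of \cref{thm:dirichlet} by engineering a concrete tuple $(q,y) \in M \times V$ and an open set $O$ that meet the hypotheses there; that is, I need an open $O$ with $\mu(O \cap \{y = 0\}) = 0$ and $\capa(O \cap \{y = 0\}) > 0$, together with a $q \in M$ satisfying $y \in \NN_M(q)$. The crux is to manufacture a set which is $\mu$-negligible but of strictly positive capacity, and the two structural assumptions are tailored precisely for this purpose: the second one supplies the required discrepancy between measure and capacity, while the first one allows me to put this discrepancy set into the level-set form needed to make $y$ a normal direction.

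First I would exploit the Borel set $A$ with $\mu(A) = 0$ and $\capa(A) > 0$. Since $A \in \BB$, Choquet's capacity theorem (used in the same way as in the proof of \cref{thm:dirichlet}, cf.\ \cite[(2.1.6)]{FukushimaOshimaTakeda2011}) produces a compact set $K \subset A$ with $\capa(K) > 0$. By monotonicity of $\mu$, the inclusion $K \subset A$ forces $\mu(K) = 0$, so $K$ is compact, $\mu$-null, and of positive capacity. Feeding this particular $K$ into the first structural assumption yields a function $y \in V$ and an open set $O \subset X$ with $K = O \cap \{y = 0\}$ up to a set of zero capacity.

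Next I would check that $O$ and $y$ satisfy the size conditions of \cref{thm:dirichlet}. Here I would invoke the elementary fact that every set of zero capacity is $\mu$-null; this follows at once from the definition of $\capa$ via the $\EE_1$-norm, since any admissible competitor $u$ dominates the indicator of the set in $L^2(X,\mu)$, so that $\capa(N) = 0$ forces $\mu(N) = 0$. Because $O \cap \{y = 0\}$ and $K$ differ only by a capacity-zero, hence $\mu$-null, set, the monotonicity and subadditivity of $\mu$ and $\capa$ then give $\mu(O \cap \{y = 0\}) = \mu(K) = 0$ and $\capa(O \cap \{y = 0\}) = \capa(K) > 0$.

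Finally I would choose $q$ so that $y$ becomes normal to $M$ at $q$. I would simply set $q := \sign(y)$, which lies in $M$. Since the dual pairing between $v \in M \subset L^\infty(X,\mu) \subset V\dualspace$ and $y \in V \cong V\bidualspace$ is $\int_X v\,y\,\d\mu$, the choice $q = \sign(y)$ maximizes $v \mapsto \dual{v}{y}$ over $M$, so that $\dual{v-q}{y} \le 0$ for all $v \in M$, i.e.\ $y \in \NN_M(q)$. All hypotheses of \cref{thm:dirichlet} are thereby verified, and that theorem yields the non-polyhedricity of $M$. The only genuinely nontrivial ingredient is the first structural assumption, which guarantees that the measure-zero/positive-capacity set can be realized in the zero-level-set shape required to turn $y$ into a normal direction; the remaining steps are routine bookkeeping with capacities and measures.
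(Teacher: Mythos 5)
Your proposal is correct and follows essentially the same route as the paper: extract a compact $K \subset A$ of positive capacity and zero measure via Choquet's theorem, realize it as $O \cap \{y=0\}$ using the structural assumption, set $q := \sign(y)$, and invoke \cref{thm:dirichlet}. The additional details you supply (capacity-zero sets are $\mu$-null, $q=\sign(y)$ maximizes $\dual{\cdot}{y}$ over $M$) are correct and merely make explicit what the paper leaves as "trivial''.
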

\begin{proof}
	We have to construct a tuple $(q, y) \in M \times V$
	such that $y \in  \NN_M(q) \subset V\bidualspace \cong V$ and such that $M$ is not polyhedric in $q$ w.r.t.\ $y$.
	By Choquet's theorem, there is a compact set $K \subset A$
	with non-zero capacity and measure zero.
	Let $y \in V$ and an open set $O \subset X$ be given such that $K = O \cap \{y = 0\}$ up to a set of zero capacity.
	We set $q := \sign(y) \in M$.
	Then, it trivially holds $y \in \NN_M(q)$, and we may apply \cref{thm:dirichlet}
	to obtain that $M$ is not polyhedric in $q$ w.r.t.\ $y$. This completes the proof. 
\end{proof}

We would like to point out that the assumption ``\emph{For every compact set $K \subset X$ there exist a function $y \in V$ and an open set $O \subset X$ with $K = O \cap \{y = 0\}$ up to a set of zero capacity}'' appearing in \cref{thm:general_nonpolyhedric} is not restrictive at all. This condition is, e.g., satisfied when the Lipschitz functions belong to $V$
(in this case $y$ can be constructed with the distance function) or if $X = \Omega \subset \R^d$ is an open set and $C_c^\infty(\Omega) \subset V$, see \cite[Lemma~A.1]{ChristofClasonMeyerWalther2017}. Similarly, the existence of a set with zero measure but non-zero capacity
is ensured in many Dirichlet spaces, cf.\  \cite[Theorem~5.5.1]{AdamsHedberg1996}. Some tangible examples can be found in the following corollary.

\begin{corollary}
\label{corollary:tangible1}
Let $\Omega \subset \R^d$ be a bounded domain with a Lipschitz boundary. Then, the following holds true:
\begin{enumerate}
\item The set $  \{ v \in L^\infty(\Omega) \mid  -1 \leq v \leq 1 \ \LL^d\text{-a.e.\ in } \Omega   \}  $ is non-polyhedric in the dual space $H^{-s}(\Omega)$ of the space $H_0^s(\Omega) := \cl_{\|\cdot \|_{H^s}}(C_c^\infty(\Omega))$ for all $s \in (0, 1]$.
\item The set $  \{ v \in L^\infty(\partial \Omega) \mid  -1 \leq v \leq 1 \ \SSS^{d-1}\text{-a.e.\ on } \partial \Omega   \}  $ is non-polyhedric in the dual space $H^{-1/2}(\partial \Omega)$ of the space $H^{1/2}(\partial \Omega)$.
\end{enumerate}
\end{corollary}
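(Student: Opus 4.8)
The plan is to derive both assertions from \cref{thm:general_nonpolyhedric} by verifying its three hypotheses in each of the two concrete settings. For part~(i) I would take $V := H_0^s(\Omega)$, $X := \Omega$ and $\mu := \LL^d$, and for part~(ii) I would take $V := H^{1/2}(\partial\Omega)$, $X := \partial\Omega$ and $\mu := \SSS^{d-1}$. In both cases the dual $V\dualspace$ is precisely the negative-order space appearing in the statement (namely $H^{-s}(\Omega)$, respectively $H^{-1/2}(\partial\Omega)$), so that the non-polyhedricity furnished by \cref{thm:general_nonpolyhedric} is exactly the claimed one and no further identification via \cref{lem:polyhedric_in_subspace} is needed.

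First I would check the \emph{Dirichlet space and density hypotheses}. For $s = 1$ the space $H_0^1(\Omega)$ is the classical regular Dirichlet space with $\EE(u,v) = \int_\Omega \nabla u \cdot \nabla v \, \d x$ already discussed in \cref{subsec:DirichletSpaces}. For $s \in (0,1)$ I would use the Gagliardo form $\EE(u,v) = \iint (u(x)-u(y))(v(x)-v(y)) \abs{x-y}^{-d-2s} \, \d x \, \d y$ (with the functions extended by zero): the Markovian property follows from the pointwise contraction inequality $\abs{v(x) - v(y)} \le \abs{u(x) - u(y)}$ for $v := \min(1,\max(0,u))$, which gives $\EE(v,v) \le \EE(u,u)$, while regularity follows from the density of $C_c^\infty(\Omega)$ in $H_0^s(\Omega)$ (by definition) and of $C_c^\infty(\Omega)$ in $C_c(\Omega)$ in the supremum norm. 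The boundary space $H^{1/2}(\partial\Omega)$ is treated analogously, now with the Gagliardo form on the compact Lipschitz manifold $\partial\Omega$. Since $\Omega$ and $\partial\Omega$ carry finite measures, $L^2 \subset L^1$ and the dense embedding $V \hookrightarrow L^1(X,\mu)$ is immediate.

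Next I would verify the \emph{compact-set condition}. For part~(i) this is covered directly by the remark following \cref{thm:general_nonpolyhedric}: since $C_c^\infty(\Omega) \subset H_0^s(\Omega)$, the existence of a $y \in V$ and an open $O$ with $K = O \cap \{y = 0\}$ up to a capacity-null set is guaranteed by \cite[Lemma~A.1]{ChristofClasonMeyerWalther2017}. For part~(ii) the Lipschitz functions lie in $H^{1/2}(\partial\Omega)$, so one may take $O := \partial\Omega$ and $y := \dist(\cdot, K)$, for which $\{y = 0\} = K$ exactly.

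The main obstacle, and the step I would spend most care on, is the \emph{existence of a set $A \in \BB$ with $\mu(A) = 0$ but $\capa(A) > 0$}, uniformly for all $s \in (0,1]$. Here I would use that the capacity induced by $\EE_1$ is comparable to the Bessel capacity $C_{s,2}$, whose vanishing is governed by Hausdorff dimension: on a $k$-dimensional space one has $\capa(A) = 0$ if $\dim_{\mathcal H}(A) < k - 2s$ and $\capa(A) > 0$ if $\dim_{\mathcal H}(A) > k - 2s$, cf.\ \cite[Theorem~5.5.1]{AdamsHedberg1996}. Taking $A$ to be a compact set of full Hausdorff dimension ($k = d$ in $\Omega$, $k = d-1$ on $\partial\Omega$) but of measure zero --- such sets exist, e.g.\ as the closure of a countable union of Cantor-type sets whose dimensions increase to $k$ --- one obtains $\dim_{\mathcal H}(A) = k > k - 2s$ for every $s \in (0,1]$, hence $\capa(A) > 0$ while $\mu(A) = 0$. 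With all three hypotheses verified, \cref{thm:general_nonpolyhedric} yields the non-polyhedricity of $M$ in $V\dualspace$ in both cases, which is exactly the assertion of the corollary.
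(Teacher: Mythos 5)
Your proposal is correct and follows essentially the same route as the paper: both derive the corollary from \cref{thm:general_nonpolyhedric} by verifying that $H_0^s(\Omega)$ and $H^{1/2}(\partial\Omega)$ are regular Dirichlet spaces, that the compact-set condition holds via the distance function $\dist(\cdot,K)$ (with a cut-off where needed), and that sets of measure zero but positive capacity exist via \cite[Theorem~5.5.1]{AdamsHedberg1996}. You merely spell out some details (the Gagliardo form, the Hausdorff-dimension criterion) that the paper delegates to citations.
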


\begin{proof}
It is well known that the spaces $H_0^s(\Omega)$, $s \in (0, 1]$, and $H^{1/2}(\partial \Omega)$ are regular Dirichlet spaces, cf.\ \cite{MusinaNazarov2017} and also \cite[Remark 3.2(c)]{ChristofMueller2017}. The existence of sets with zero measure but non-zero capacity in these spaces 
follows, e.g., from \cite[Theorem~5.5.1]{AdamsHedberg1996} (for $H^{1/2}(\partial \Omega)$, we can use a rectification argument here). Lastly, given a compact set $K$, we can construct a 
 tuple $(y, O)$ as in \cref{thm:general_nonpolyhedric} by multiplying the distance function $\mathrm{dist}(\cdot, K)$ with an appropriate cut-off function. The claim now follows immediately from  \cref{thm:general_nonpolyhedric}. 
\end{proof}

\subsection{Spaces embedding into the continuous functions}
\label{subsec:ContinuousSpaces}
In this section, we prove a result analogous to \cref{thm:dirichlet} in the case that $V$ embeds into the space $C_b(X)$ of bounded continuous functions. We assume, for simplicity,  that $X$, $\BB$ and $\mu$ have the same properties as in  \cref{subsec:DirichletSpaces}.

\begin{theorem}
	\label{thm:special_continuous}
	Assume that $V$ is a reflexive Banach space that embeds densely into $L^1(X, \mu)$ and continuously into $C_b(X)$. Suppose that a tuple $(q, y) \in M \times V$ is given such that $y \in \NN_M(q) \subset V\bidualspace \cong V$ and such that there exists an open set $O \subset X$ with $\mu( O \cap \{y = 0\} ) =0$ and $ O \cap \{y = 0\}  \neq \emptyset$.
	Further, suppose that for every $x \in O$
	there is a function $\varphi \in V$ with
	$\varphi(x) = 1$ and $\varphi = 0$ on $\Omega \setminus O$.
	Then, $M$ is not polyhedric in $q$ w.r.t.\ $y$.
\end{theorem}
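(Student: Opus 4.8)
The plan is to imitate the proof of \cref{thm:dirichlet}, but to replace capacity-theoretic tools with the point-evaluation functionals that the embedding $V \hookrightarrow C_b(X)$ makes available. As explained at the beginning of \cref{sec:setswithbounds}, it suffices to construct a $\nu \in \TT_M(q) \cap y\anni$ such that every sequence $\{\nu_n\} \subset L^\infty(X,\mu)$ with $\nu_n \to \nu$ in $V\dualspace$ satisfies $\nu_n \, \chi_O \neq 0$ for all large $n$. The key structural feature here is that, since $V$ embeds continuously into $C_b(X)$, for each point $x \in X$ the evaluation functional $\delta_x : V \to \R$, $v \mapsto v(x)$, is a bounded linear functional, i.e.\ $\delta_x \in V\dualspace$. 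My candidate for the singular tangent direction is precisely such a Dirac measure: I would fix a point $x_0 \in O \cap \{y = 0\}$ (which exists by hypothesis) and set $\nu := -\delta_{x_0} \in V\dualspace$.

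First I would check that $\nu \in y\anni$. This is immediate from the choice of $x_0$, since $\dual{\nu}{y} = -y(x_0) = 0$ because $x_0 \in \{y = 0\}$ and $y$ is (identified with) a continuous function. Second, I would verify that $\nu \in \TT_M(q)$. Here I use that $q = \sign(y) \in M$ and that $q = 1$ $\mu$-a.e.\ on $\{y > 0\}$, as recorded at the start of the section. The idea is to approximate $-\delta_{x_0}$ in the weak-$\star$/weak topology of $V\dualspace$ by nonnegative $L^\infty$-densities supported near $x_0$ and contained in $\{y \ge 0\}$, so that the corresponding elements $-\rho_n$ lie in $\RR_M(q)$; then Mazur's lemma upgrades weak convergence to strong convergence of convex combinations, yielding $\nu \in \overline{\RR_M(q)} = \TT_M(q)$. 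Concretely, one picks a sequence of mollifier-type densities $\rho_n \ge 0$ with $\int_X \rho_n \,\d\mu = 1$ concentrating on shrinking neighborhoods of $x_0$; continuity of each $v \in V$ at $x_0$ gives $\scalarprod{\rho_n}{v}_{L^2} = \int_X \rho_n v \,\d\mu \to v(x_0) = \dual{\delta_{x_0}}{v}$, so that $\rho_n \weakly \delta_{x_0}$ in $V\dualspace$ (using reflexivity of $V$ to pass from $\weaklystar$ to $\weakly$). Since the mass of $\rho_n$ can be placed on $\{y \ge 0\}$, where $q = 1$, the functions $-\rho_n$ are admissible radial directions.

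Third, and this is the crux, I would show the separation property: for any $\{\nu_n\} \subset L^\infty(X,\mu)$ with $\nu_n \to \nu$ strongly in $V\dualspace$, one has $\nu_n \chi_O \neq 0$ for large $n$. This is exactly where the last hypothesis of the theorem — that for every $x \in O$ there is a $\varphi \in V$ with $\varphi(x) = 1$ and $\varphi = 0$ on $X \setminus O$ — is essential, mirroring the Urysohn/regularity step in \cref{thm:dirichlet}. Taking such a test function $\varphi$ associated with $x_0$, strong convergence $\nu_n \to \nu$ in $V\dualspace$ forces
\begin{equation*}
	\int_O \nu_n \, \varphi \, \d\mu
	=
	\scalarprod{\nu_n}{\varphi}_{L^2}
	=
	\dual{\nu_n}{\varphi}
	\to
	\dual{\nu}{\varphi}
	=
	-\varphi(x_0)
	=
	-1
	\neq 0,
\end{equation*}
where the first equality uses $\varphi = 0$ outside $O$. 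Hence $\nu_n \chi_O \neq 0$ for $n$ large, which is precisely what \eqref{eq:counterexample} requires.

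The main obstacle I expect is the rigorous justification in the second step that $\delta_{x_0}$ is a weak limit of nonnegative admissible densities supported in $\{y \ge 0\}$, rather than merely in $O$. One must be careful: the densities must be nonnegative \emph{and} supported where $q = \sign(y) = 1$ (or at least where $-\rho_n$ stays in $\RR_M(q)$), and simultaneously concentrate at $x_0 \in \{y = 0\}$, which sits on the boundary between $\{y > 0\}$ and $\{y < 0\}$. The continuity of $y$ and the assumption $\mu(O \cap \{y=0\}) = 0$ are what make this feasible: one can choose the neighborhoods so that their $\mu$-mass lies essentially in $\{y > 0\}$ (or symmetrically $\{y < 0\}$, in which case one works with $\eta_n^2$ and $q = -1$ as in the dichotomy of \cref{thm:dirichlet}). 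Sorting out this concentration argument cleanly — and confirming that reflexivity of $V$ legitimately converts the natural $\weaklystar$ convergence into the $\weakly$ convergence needed for Mazur's lemma — is the technical heart of the proof; the remaining steps are direct analogues of the Dirichlet-space case.
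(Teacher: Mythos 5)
Your proposal is correct and follows essentially the same route as the paper: the paper also fixes $x \in O \cap \{y=0\}$, approximates the Dirac measure $\delta_x$ by the normalized densities $\mu(B_{1/n}(x))^{-1}\chi_{\{y>0\}\cap B_{1/n}(x)}$ and $\mu(B_{1/n}(x))^{-1}\chi_{\{y<0\}\cap B_{1/n}(x)}$ (resolving the ``which sign carries the mass'' issue exactly by the subsequence dichotomy you describe, obtaining a positive multiple of $\delta_x$ as weak limit of elements of $\pm\RR_M(q)$), applies Mazur's lemma, and tests against the function $\varphi$ in the final separation step. The only cosmetic difference is that you normalize each piece to a probability density so as to recover $\delta_{x_0}$ itself rather than a positive multiple, which changes nothing in the argument.
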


We, of course, use the continuous representatives of the functions $y$ and $\varphi$ in the above to sensibly define the set $ O \cap \{y = 0\}$ and the pointwise evaluation of $\varphi$.

\begin{proof}
	Fix a point $x \in O \cap \{y = 0\}$ and define 
	\begin{equation*}
	\eta_n^1 :=  \mu(B_{1/n}(x))^{-1}\, \chi_{\{y > 0\} \cap B_{1/n}(x)},\qquad \eta_n^2 := \mu( B_{1/n}(x))^{-1} \, \chi_{\{y < 0\} \cap B_{1/n}(x)},
	\end{equation*}
	where $B_{1/n}(x)$ is the ball around $x$ with radius $1/n$.
	Then, $\eta^1_n + \eta^2_n$ converge weakly to the Dirac measure at $x$ in $V\dualspace$
	and (along a subsequence), one of the sequences $\{\eta_n^1\}$, $\{\eta_n^2\}$ converges weakly to a positive multiple of
	the Dirac measure at $x$.
	Mazur's lemma implies that the weak limits of $\{-\eta_n^1\}$ and $\{\eta_n^2\}$
	belong to $\TT_M(q)$.
	Now, we can continue as in the proof of \cref{thm:dirichlet}.
	In particular, we can utilize the function $\varphi$, given by the assertion of the theorem,
	in the last step of the proof.
\end{proof}
Obviously,
many function spaces possess elements $y$ whose zero level sets are non-empty and (locally) of measure zero.
Similarly, functions $\varphi$ with the properties in \cref{thm:special_continuous} can easily be found in many situations. We do not go into details here but only give the following exemplary result.

\begin{corollary}
Let $\Omega \subset \R^d$ be a bounded domain with a Lipschitz boundary. Then, the  set $  \{ v \in L^\infty(\Omega) \mid  -1 \leq v \leq 1 \ \LL^d\text{-a.e.\ in } \Omega   \}  $ is non-polyhedric in the dual of the space $W^{s,p}(\Omega)$ for all $0 < s \leq 1$ and all $1 < p < \infty$ with $sp>d$.
\end{corollary}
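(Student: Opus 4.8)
The plan is to deduce the claim directly from \cref{thm:special_continuous}, applied with the choices $X := \Omega$, $\mu := \LL^d$, and $V := W^{s,p}(\Omega)$. Since $\Omega$ is a bounded open subset of $\R^d$, it is a locally compact, separable metric space, and the Lebesgue measure satisfies $\LL^d(G) > 0$ for every non-empty open $G \subset \Omega$ as well as $\LL^d(\Omega) < \infty$; hence the standing assumptions on $(X, \BB, \mu)$ are met. It therefore remains to verify the three structural hypotheses on $V$ and to exhibit a tuple $(q, y)$ together with an open set $O$ and the family of cut-off functions required by \cref{thm:special_continuous}.

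First I would check the properties of $V = W^{s,p}(\Omega)$. Reflexivity is classical for $1 < p < \infty$, as these fractional Sobolev spaces are uniformly convex. The continuous and dense embedding into $L^1(\Omega)$ follows from $W^{s,p}(\Omega) \hookrightarrow L^p(\Omega) \hookrightarrow L^1(\Omega)$ (the last step using $\LL^d(\Omega) < \infty$) together with the inclusion $C_c^\infty(\Omega) \subset W^{s,p}(\Omega)$ and the density of $C_c^\infty(\Omega)$ in $L^1(\Omega)$. The crucial point is the continuous embedding into $C_b(\Omega)$: because $\Omega$ has a Lipschitz boundary and is thus an extension domain, the condition $sp > d$ places us in the supercritical Morrey regime, so that $W^{s,p}(\Omega) \hookrightarrow C^{0,\alpha}(\overline\Omega)$ with $\alpha = s - d/p > 0$ by the (fractional) Sobolev embedding theorem (see, e.g., \cite{AdamsHedberg1996}); restricting to $\Omega$ yields $V \hookrightarrow C_b(\Omega)$, and we work throughout with the continuous representatives. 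I expect this embedding --- in particular getting the fractional case on Lipschitz domains correct --- to be the main, albeit standard, technical obstacle.

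It remains to construct the data. I would fix an interior point $x_0 \in \Omega$ and a radius $r > 0$ with $\overline{B_r(x_0)} \subset \Omega$, then set $O := B_r(x_0)$ and $y(x) := x_1 - (x_0)_1$, where $x_1$ denotes the first coordinate. Then $y \in V$ (being affine on a bounded set), and its zero set is the hyperplane $\{x_1 = (x_0)_1\}$, so that $O \cap \{y = 0\}$ is a non-empty set of $\LL^d$-measure zero (it contains $x_0$). Putting $q := \sign(y) \in M$, the normal-cone inclusion $y \in \NN_M(q)$ holds exactly as in the proof of \cref{thm:general_nonpolyhedric}, since $\int_\Omega (v - q)\, y \, \d\LL^d \le \int_\Omega \abs{y}\,\d\LL^d - \int_\Omega q\, y\,\d\LL^d = 0$ for all $v \in M$. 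Finally, for any $x \in O$ I would choose $\rho > 0$ with $B_\rho(x) \subset O$ and a standard bump $\varphi \in C_c^\infty(B_\rho(x)) \subset C_c^\infty(\Omega) \subset V$ with $\varphi(x) = 1$; such $\varphi$ vanishes on $\Omega \setminus O$ and thus supplies the cut-off required in \cref{thm:special_continuous}. With all hypotheses verified, \cref{thm:special_continuous} yields that $M$ is non-polyhedric in $q$ w.r.t.\ $y$, which is the assertion.
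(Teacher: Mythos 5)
Your proposal is correct and follows essentially the same route as the paper's own proof: both verify the embedding hypotheses of \cref{thm:special_continuous} via the fractional Sobolev (Morrey) embedding on a Lipschitz extension domain and then supply $q = \sign(y)$ together with explicitly constructed $y$, $O$ and bump functions $\varphi$. Your write-up merely spells out the constructions (affine $y$, ball $O$, $C_c^\infty$ cut-offs) that the paper dismisses as ``easily constructed,'' and the details you give are all sound.
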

\begin{proof}
From the Sobolev embeddings for fractional Sobolev spaces, cf.\ \cite[Theorem 8.2]{DINEZZA2012521}, it follows that $W ^{s,p}(\Omega)$ embeds continuously into the space $C(\overline{\Omega})$ and densely into $L^1(\Omega)$ for all $0 < s \leq 1$ and all $1 < p < \infty$ with $sp>d$. Further, functions $y$ and $\varphi$ with the properties in \cref{thm:special_continuous} are easily constructed (note that we may again set $q := \sign(y)$). Thus, the claim follows immediately.
\end{proof}

Finally, we briefly mention that the reflexivity assumption in \cref{thm:special_continuous} is crucial.
In particular, in the setting $\MM(X) := C_0(X)\dualspace$
the space $\MM(X)$ is a Banach lattice and we can apply
\cite[Theorem~4.18]{Wachsmuth2016:2}
to obtain the polyhedricity of $M \subset \MM(X)$,
see also \cite[Example~4.21(7)]{Wachsmuth2016:2}.

\section{Conclusion}
\label{sec:conclusion}

We conclude our analysis with two remarks on the results in \cref{subsec:DirichletSpaces,subsec:ContinuousSpaces}.

First, we would like to point out that the non-polyhedricity of the set $ \{ v \in L^\infty(\partial \Omega) \mid  -1 \leq v \leq 1 \ \SSS^{d-1}\text{-a.e.\ in } \partial \Omega   \}$ as a subset of $H^{-1/2}(\partial \Omega)$ in \cref{corollary:tangible1} implies that the assumptions on the regularity of the contact set typically made in the sensitivity analysis of frictional contact problems, cf.\ \cite[Lemma 4.24]{SokolowskiZolesio1992}, cannot be dropped. Additional assumptions on the solution or the involved sets are unavoidable when an approach analogous to that in \cref{sec:motivation} is used to study the directional differentiability of the solution operator to a $H^1$- or $H^{1/2}$-elliptic variational inequality of the second kind that involves a term of the form $\| v \|_{L^1(\partial \Omega)}$ (since, as we have seen, there are points where the condition of polyhedricity is violated). The same applies to $H^1$-elliptic variational inequalities involving a term of the form $\|v\|_{L^1(\Omega)}$ and the assumptions/the approach in \cite{DelosReyes2016}.

Second, it should be noted that the non-polyhedricity of the set \eqref{eq:BoundSetDual} in dual space cannot be overcome by resorting, e.g., to the concept of extended polyhedricity. As a matter of fact, the set  \eqref{eq:BoundSetDual} typically possesses positive curvature in the dual spaces.
In the space $H^{-1}(\Omega)$, this can be seen, e.g., in the results of \cite{ChristofMeyer2016}.

\ifbiber
	\printbibliography
\else
	\bibliographystyle{plainnat}
	\bibliography{references}
\fi
\end{document}